\theoremstyle{plain}
\newtheorem{theorem}{Theorem}
\newtheorem{lemma}{Lemma}
\theoremstyle{definition}
\theoremstyle{remark}
\newtheorem*{remark}{Remark}
\newcommand\numberthis{\stepcounter{equation}\tag{\theequation}}
\newcommand{\ssum}[1]{\sum_{\substack{#1}}}
\newcommand{\e}{{\rm e}}
\newcommand{\dd}{{\rm d}}
\newcommand{\eps}{{\varepsilon}}
\newcommand{\C}{{\mathbb C}}
\newcommand{\R}{{\mathbb R}}
\newcommand{\Z}{{\mathbb Z}}
\newcommand{\N}{{\mathbb N}}
\newcommand{\1}{{\mathbf 1}}
\newcommand{\smatrix}[1]{\left(\begin{smallmatrix}#1\end{smallmatrix}\right)}
\newcommand{\ppmatrix}[1]{\begin{pmatrix}#1\end{pmatrix}}
\newcommand{\abs}[1]{\left|#1\right|}
\numberwithin{equation}{section}
\title{Two arithmetic applications of perturbations of composition operators}
\date{\today}
\author{S. Bettin}
\address{SB: DIMA - Dipartimento di Matematica, Via Dodecaneso, 35, 16146 Genova, Italy}
\email{bettin@dima.unige.it}
\author{S. Drappeau}
\address{SD: Aix Marseille Universit\'e, CNRS, Centrale Marseille, I2M UMR 7373, 13453 Marseille, France}
\email{sary-aurelien.drappeau@univ-amu.fr}
\subjclass[2010]{Primary: 47A55; Secondary: 11B85}
\keywords{Perturbation theory, composition operator, Thue-Morse sequence, Stern sequence}
\dedicatory{À la mémoire de Christian Mauduit}
\begin{document}

\begin{abstract}
  We estimate the spectral radius of perturbations of a particular family of composition operators, in a setting where the usual choices of norms do not account for the typical size of the perturbation. We apply this to estimate the growth rate of large moments of a Thue-Morse generating function and of the Stern sequence. This answers in particular a question of Mauduit, Montgomery and Rivat (2018).
\end{abstract}

\maketitle

\section{Introduction}

The present note is concerned with a case of asymptotic perturbation of a linear operator, which is a widely studied subject; we refer to the monograph~\cite{Kato} and to the recent work~\cite{Kloeckner2019} for references. There are well-understood general results which deal with the behaviour of the spectrum of the perturbation~$T+\eps$ of a bounded linear operator~$T$, granted one can find a norm with respect to which~$\eps$ can indeed be considered a perturbation.

In the recent works~\cite{BDS, MauduitEtAl2018}, instances of this question arose which do not fall in the scope of the general analysis, the reason being that the natural norms one has do not account for the true expected magnitude of the perturbation. The purpose of this note is to present an alternate argument, which relies on an ad-hoc construction but allows to answer completely the questions in~\cite{BDS,MauduitEtAl2018}. We begin by a discussion of the two arithmetic applications we are considering.

\subsection{Moments of a Thue-Morse generating function}

In this section only, for all~$m\in\N$, we let~$t(m) \in \{\pm 1\}$ denote the parity of the sum of digits of~$m$ in base~$2$, so that~$(t(m))_{m\geq 0}$ is the celebrated Prouhet-Thue-Morse sequence~\cite{AlloucheShallit1999}. For all~$n\in\N$, we let~$T_n:\R/\Z \to\C$ be defined as
$$ T_n(x) = \prod_{0\leq r<n}(1-\e(2^r x)) = \sum_{0\leq m < 2^n} t(m) \e(mx). $$
In~\cite{MauduitEtAl2018}, the authors study the moments
$$ M_k(n) := \int_0^1 \abs{T_n(x)}^{2k} \dd x,\qquad k\in\N. $$
Upper-bounds on~$M_k(n)$ are an important ingredients on works on the level of distribution of the Thue-Morse sequence, in particular in~\cite{FouvryMauduit1996,MauduitRivat2010} where estimates of~$M_{1/2}(n) = \|T_n\|_1$ and~$\lim_{k\to\infty} M_k(n)^{1/(2k)} = \|T_n\|_\infty$ are used to obtain asymptotic formulas for the number of integers with multiplicative constraints (primes or almost-primes) having a predetermined parity of their sum-of-digits modulo~$2$.

In~\cite{MauduitEtAl2018}, the authors show that the sequence~$(M_k(n))_{n\geq 0}$ satisfies a linear recurrence equation, and they deduce, for each~$k>0$ the existence of constants~$C_k>0$ and~$\varrho_k >0$ such that
\begin{equation}
  M_k(n) \sim C_k \varrho_k^n \qquad (n\to+\infty).\label{eq:asymp-mkn}
\end{equation}
The behaviour of the constant~$\varrho_k$ as~$k\to+\infty$ was left as an open question in~\cite{MauduitEtAl2018}. The authors conjectured that~$\varrho_k = \tfrac12 3^k (1 + O(k^{-2}))$ for~$k\geq 1$. Towards this estimate, they show the upper-bound~$\varrho_k \leq \tfrac12(3^k + 4^{2k/3})$.

Using Theorem~\ref{thm:main} below we are able to prove this conjecture, isolating also a secondary term of size exponentially smaller.
\begin{theorem}\label{th:MMR}
  For~$\delta_1 = \prod_{n\geq 1} \frac{2}{\sqrt{3}}\sin(\frac{\pi}3(1 + \frac{(-1)^n}{2^n})) = 0.6027\dotsb$ and~$\eta = 0.506$, we have
  $$ \varrho_k = \tfrac12 3^k\big(1 + \delta_1^{2k} + O(\eta^{2k})\big). $$
\end{theorem}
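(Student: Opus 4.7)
My plan is to interpret $M_k(n)$ as iterates of a transfer operator and then invoke Theorem~\ref{thm:main}. Writing $|T_n(x)|^{2k} = \prod_{r<n} f(2^r x)^k$ with $f(x) := 4\sin^2(\pi x)$, and dualising against the doubling map $T \colon y \mapsto 2y \bmod 1$, one obtains $M_k(n) = \int_0^1 \mathcal{L}_k^n \1(y)\, \dd y$ where
$$
\mathcal{L}_k g(y) := \tfrac{1}{2}\bigl(f(y/2)^k g(y/2) + f((y+1)/2)^k g((y+1)/2)\bigr).
$$
On a suitable Banach space of functions analytic in a neighbourhood of $[0,1]$, $\mathcal{L}_k$ is compact and $\varrho_k$ is its simple leading eigenvalue; Theorem~\ref{thm:main} will provide the quantitative spectral control needed.

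For large $k$, a thermodynamic-formalism heuristic identifies $\varrho_k \sim \tfrac{1}{2} 3^k$: the orbit $\{1/3, 2/3\}$ is the unique period-$2$ orbit of $T$ avoiding the zeros of $f$, and its geometric $f$-mean, equal to $3$, is maximal over all periodic orbits of $T$. Let $\psi = \psi_k$ be the leading eigenfunction normalised so that $\psi(1/3) = \psi(2/3) = 1$. Using $f(1/6) = 1$ and $f(2/3) = 3$, the eigenvalue equation $\mathcal{L}_k \psi = \varrho_k \psi$ at $y = 1/3$ reads
$$
\varrho_k = \tfrac{1}{2}\bigl(\psi(1/6) + 3^k\bigr),
$$
and the symmetry $y \mapsto 1-y$ of $f$ and $\mathcal{L}_k$ makes the analogous identity at $y = 2/3$ consistent, forcing $\psi(5/6) = \psi(1/6)$. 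The whole problem therefore reduces to determining $\psi(1/6)$ asymptotically.

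To compute $\psi(1/6)$ to leading non-trivial order, iterate the eigenvalue equation along the inverse branch of largest $f$-weight. Starting from $y_0 = 1/6 < 1/2$ one takes $\phi_1 \colon y \mapsto (y+1)/2$ to reach $y_1 = 7/12 > 1/2$, then $\phi_0 \colon y \mapsto y/2$ to reach $y_2 = 7/24$, and so on, producing the preperiodic sequence $(y_n) = (1/6, 7/12, 7/24, 31/48, 31/96, \dots)$ converging geometrically to $\{1/3, 2/3\}$. Using $\sin(\pi(1-y)) = \sin(\pi y)$, one checks by induction that $\sin(\pi y_n) = \sin\bigl(\tfrac{\pi}{3}(1 + (-1)^{n+1}/2^{n+1})\bigr)$, so
$$
\prod_{n \geq 1} \frac{f(y_n)}{3} \;=\; \prod_{m \geq 2} \Bigl[\tfrac{2}{\sqrt{3}}\sin\bigl(\tfrac{\pi}{3}(1+\tfrac{(-1)^m}{2^m})\bigr)\Bigr]^2 \;=\; \bigl(\sqrt{3}\,\delta_1\bigr)^2 \;=\; 3\delta_1^2,
$$
where the last identity uses that the $m = 1$ factor of $\delta_1$ equals $(2/\sqrt{3}) \sin(\pi/6) = 1/\sqrt{3}$. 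The dominant-branch heuristic thus predicts $\psi(1/6) \sim 3^k\delta_1^{2k}$, giving $\varrho_k = \tfrac{1}{2} 3^k(1 + \delta_1^{2k} + \text{error})$.

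The main technical obstacle is to make the error $O(\eta^{2k})$ quantitative. The heuristic omits (a) the sub-dominant inverse branches at each step, whose relative contribution at $y_n$ is $(f(\phi_{\mathrm{sub}}(y_n))/f(\phi_{\mathrm{dom}}(y_n)))^k$ and tends to $(1/3)^k$ as one approaches $\{1/3, 2/3\}$, and (b) the discrepancy between $\varrho_k$ and $\tfrac{1}{2} 3^k$ when substituted back into the recursion. To control both, I would decompose $\mathcal{L}_k^2$ as the sum of a principal part, consisting of the two weighted composition operators associated to the inverse branches with fixed points $1/3$ and $2/3$, plus a genuinely small remainder whose weights vanish at the fixed points $0, 1$. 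Applying Theorem~\ref{thm:main} to this decomposition, and carefully tracking the first omitted terms in the infinite product above, should yield the explicit constant $\eta = 0.506$.
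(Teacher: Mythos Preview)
Your heuristic is on target and your identification of the infinite product as~$3\delta_1^2$ is correct; in fact, your ``dominant-branch'' iteration is essentially computing the value at~$1/3$ of the coboundary~$G(x) = \prod_{n\ge 0} S(a^n x)$ that the paper constructs. But the plan in your last paragraph has a structural gap. Theorem~\ref{thm:main} is written for an operator of the very specific shape~\eqref{eq:expr-T-kappa}: \emph{one} branch~$a$ with an attracting fixed point and weight normalised to~$1$, and \emph{one} branch~$b$ carrying a small multiplicative weight~$\kappa$. Your proposed decomposition of~$\mathcal L_k^2$ has two ``principal'' branches (with fixed points~$1/3$ and~$2/3$) and two ``remainder'' branches; it simply does not fit the hypotheses~\eqref{eq:hyp-b}--\eqref{eq:hyp-g}, and nothing you have said explains how to reduce it to that shape.

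The paper closes this gap in two steps you have not taken. First, it uses the symmetry~$x\mapsto 1-x$ (your~$\mathcal L_k$ coincides with the operator~$P_k$ of~\cite{MauduitEtAl2018}, which preserves even functions) to pass from~$P_k$ to the two-branch operator~$U_{2k}[f](x)=S(x)^{2k}\big(f(1-x/2)+f(x/2)\big)$, whose single dominant branch~$a(x)=1-x/2$ has attracting fixed point~$2/3$; this collapses your period-two orbit into a fixed point. Second, it conjugates by~$g=G^{2k}$, where~$G$ is chosen precisely so that~$G(x)=S(x)G(a(x))$. This normalises the~$a$-branch weight to~$1$ and leaves the~$b$-branch with the weight~$\kappa(x)=\xi(x)^{2k}$, $\xi(x)=G(x/2)/G(1-x/2)$, which is exactly the form Theorem~\ref{thm:main} needs. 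Your product then reappears as~$\kappa_0=\xi(2/3)^{2k}=\delta_1^{2k}$. The constant~$\eta=0.506$ does not come from ``the first omitted term'' of the product: it arises from hypothesis~\eqref{eq:hyp-aba} via the inclusion~$a^kba[0,1]\subset[0,7/8]$, giving an error controlled by~$\xi(7/8)^{2k}\kappa_0$ with~$\xi(7/8)\cdot\delta_1\approx 0.503$, whence any~$\eta>0.503$ absorbs the polynomial prefactor.
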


\subsection{Moments of the Stern sequence}

Our second application concerns the Stern sequence $(s(n))_{n\in\N_{>0}}$, defined by~$s(1)$ and the recursion formula
$$ s(2n) = s(n), \qquad s(2n+1) = s(n)+s(n+1). $$
This sequence has been widely studied due to its links with Farey fractions and enumeration of the rationals~\cite{KesseboehmerEtAl2016}, Automatic sequences~\cite{AlloucheShallit2003}, or the Minkowski function and the thermodynamic formalism of the Farey map~\cite{PrellbergSlawny1992,Dodds,KesseboehmerStratmann2007,BDS}.

For all~$\tau\in\C$ and~$N\in\N_{>0}$, define the moment sequence
$$ M_\tau(N) := \sum_{2^N < n \leq 2^{N+1}} s(n)^\tau. $$
In~\cite{BDS}, the asymptotic estimation of~$M_\tau(N)$ as~$N\to\infty$ for~$\tau$ in a neighborhood of~$0$ led to a central limit theorem for the values~$\log s(n)$. The asymptotic behaviour of~$M_\tau(N)$ for~$\tau$ away from~$0$ is an interesting question. Let us focus on large integer moments,~$\tau=k\in\N$. It is not difficult to show, in analogy with~\eqref{eq:asymp-mkn}, that the sequence~$(M_\tau(N))_{N\geq 0}$ satisfies a linear recurrence equation, from which we deduce the following statement, to be proven in Section~\ref{sec:proof-theor-stern} below: for all~$k\in\N$, there are constants~$D_k>0$ and~$\sigma_k>0$ such that
\begin{equation}
  M_k(N) \sim D_k \sigma_k^N \qquad (N\to+\infty).\label{eq:asympt-MkN}
\end{equation}
It is well-known~\cite[eq.~(1.4)]{BaakeCoons2018} that~$\sigma_1 = 3$ (in fact,~$M_1(N) = 3^N$ exactly). The constant~$\sigma_k$ is related to the pressure function associated to the Farey system~\cite{KesseboehmerStratmann2007,Dodds}, and one can show\footnote{This requires a slight alteration of the argument in Lemma~\ref{lem:stern-op} below, since the pressure function in~\cite{KesseboehmerStratmann2007} involves sums of~$(s(n)s(n+1))^{\tau}$ rather than~$s(n)^\tau$.} that~$\sigma_k = \exp(P(-k/2))$, where~$P(\theta)$ denotes the pressure function of the Farey system~\cite[p.135]{KesseboehmerStratmann2007}.

In Proposition 4.4.(8) of~\cite{KesseboehmerStratmann2007}, the authors show by combinatorial arguments that
$$ \phi^k \leq \sigma_k \leq \phi^k(1 + (1-\phi^{-6})^k) \qquad (\phi = \tfrac{1+\sqrt{5}}2). $$
Note that~$1-\phi^{-6} \approx 0.944\dotsb$; we also refer to~\cite[Theorem~4.15]{Dodds} for a qualitative estimate.
Also in this case we are able to identify a secondary term in the asymptotic expansion.
\begin{theorem}\label{th:stern}
  Let~$\phi=\frac{1+\sqrt{5}}2$. For~$\delta_2=\frac2{\sqrt{5}} = 0.8944\dotsb$ and~$\eta = 0.837$, we have
  $$ \sigma_k = \phi^k\big(1 + \delta_2^k + O(\eta^{k})). $$
\end{theorem}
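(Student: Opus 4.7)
The plan is to realise $\sigma_k$ as the spectral radius of a transfer operator, then to apply Theorem~\ref{thm:main} in the large-$k$ regime.

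I would first derive the transfer-operator identity. Introducing $x_n=s(n)/(s(n)+s(n+1))$ and $S_n=s(n)+s(n+1)$, the Stern recursion translates to $x_{2n+j}=\phi_j(x_n)$, $S_{2n}=(1+x_n)S_n$, $S_{2n+1}=(2-x_n)S_n$, where $\phi_0(x)=x/(1+x)$ and $\phi_1(x)=1/(2-x)$ are the inverse branches of the Farey map. Since $s(n)=x_nS_n$, this yields
\begin{equation*}
M_k(N) = 2^k\,\mathcal{L}_k^N(x^k)(1/2),\qquad (\mathcal{L}_k h)(x) := (1+x)^k h(\phi_0(x)) + (2-x)^k h(\phi_1(x)),
\end{equation*}
and on an appropriate space of functions holomorphic in a neighbourhood of $[0,1]$, $\mathcal{L}_k$ is bounded with $\spr(\mathcal{L}_k)=\sigma_k$.

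Next I would locate the asymptotically dominant structure. The IFS $\{\phi_0,\phi_1\}$ admits a unique nontrivial periodic cycle $\{1/\phi,1/\phi^2\}$, and along it both cocycles $1+x$ and $2-x$ take the value $\phi$ (using $\phi^2=\phi+1$). Consequently $\mathcal{L}_k^2$ splits into four weighted-composition branches with cocycles $(1+2x)^k$, $(2+x)^k$, $(3-x)^k$, $(3-2x)^k$: the middle two attain the maximum value $\phi^{2k}$ exactly at the two orbit points, while the outer two only reach $(\sqrt{5})^k$ there. This suggests a decomposition $\mathcal{L}_k=T_0^{(k)}+\varepsilon_k$, where $T_0^{(k)}$ is a finite-rank operator capturing the orbit that, on a two-dimensional subspace of bumps localised at $1/\phi$ and $1/\phi^2$, acts essentially as $\phi^k\smatrix{0&1\\1&0}$, so that its top eigenvalue equals $\phi^k$.

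Invoking Theorem~\ref{thm:main}, the first-order perturbation of the top eigenvalue of $T_0^{(k)}$ by $\varepsilon_k$ should produce the secondary term $\phi^k\delta_2^k$ with $\delta_2=2/\sqrt{5}$, arising as an explicit inner product of the leading eigenvectors with the off-orbit contribution; the higher-order corrections are absorbed into the remainder $O(\phi^k\eta^k)$. The principal obstacle will be the joint choice of function space and of the decomposition $T_0^{(k)}+\varepsilon_k$ in which the perturbation theorem applies with bounds uniform in~$k$ and which yields the clean value $2/\sqrt{5}$; particular care will be needed to ensure that the parabolic fixed points $0$ and $1$ of $\phi_0$ and $\phi_1$ contribute only to the $O(\eta^k)$ error rather than to the leading or secondary terms.
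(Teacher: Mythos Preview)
Your plan has a genuine structural gap: you are invoking Theorem~\ref{thm:main} as if it were a general first-order perturbation statement of the form ``eigenvalue of $T_0+\varepsilon$ equals eigenvalue of $T_0$ plus an inner-product correction'', but that is not what Theorem~\ref{thm:main} provides. Theorem~\ref{thm:main} applies only to operators of the specific shape~\eqref{eq:expr-T-kappa}, namely $T[f]=(f\circ a)+\kappa\cdot(f\circ b)$ with $\kappa\ge 0$ and with $a$ possessing a unique \emph{attracting} fixed point $x_0$; its conclusion is that the spectral radius of $T_{[g]}$ equals $1+\kappa(x_0)+O(\eta\kappa(x_0)+\kappa(x_0)^2)$. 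A decomposition into a finite-rank piece built from bump functions on a $2$-cycle plus a remainder $\varepsilon_k$ does not fit these hypotheses, and the theorem gives no information in that setting.

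Moreover, your chosen inverse branches $\phi_0(x)=x/(1+x)$ and $\phi_1(x)=1/(2-x)$ both have \emph{parabolic} fixed points (at $0$ and $1$ respectively), so neither can play the role of the map $a$ in Theorem~\ref{thm:main}; this is exactly the difficulty you flag in your last sentence, and it is fatal rather than a technicality. The paper circumvents it by using a different coding, $a(x)=1/(1+x)$ and $b(x)=x/(1+x)$, for which $a$ has the genuinely attracting fixed point $1/\phi$. The second essential step, absent from your proposal, is the conjugation by $g(x)=(\phi+x)^k$: one checks that the transfer operator $P_k$ satisfies $P_k[f]=\phi^k\, g\,T_k[g^{-1}f]$ with $T_k[f]=(f\circ a)+\kappa\cdot(f\circ b)$ and $\kappa(x)=\bigl((1+\phi x)/(\phi+x)\bigr)^k$. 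This conjugation is precisely what strips the weight off the $a$-branch and places the operator in the form~\eqref{eq:expr-T-kappa}. The secondary term then falls out mechanically as $\kappa_0=\kappa(1/\phi)=(2/\sqrt{5})^k=\delta_2^k$, rather than through an unspecified inner product, and the error $O(\eta^k)$ comes from verifying hypotheses~\eqref{eq:hyp-b}--\eqref{eq:hyp-g} with $\eta\ll k\,\xi(3/4)^k$.
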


Using a suitable uniform version of our arguments, particularly the size of the series~$\sum_r V_r^+$, $\sum_r V_r^-(x)$ in Lemma~\ref{lem:size-V} below, one could deduce an upper-bound for the number of very large values of~$s(n)$ (see~\cite{Paulin2017} for works on related questions).

\subsection{Perturbations of composition operators}\label{poco}

We will obtain Theorems~\ref{th:MMR} and~\ref{th:stern} as consequences of a more general result on perturbations of composition operators, for which we need to introduce some notation.

Let~$X$ be a set,~$a, b : X \to X$ be two maps and~$\kappa:X\to \C$ be a bounded map. We assume that~$a$ has a unique fixed point~$x_0\in X$, which is attracting on~$X$; we will assume stronger estimates below. Denote~$L^\infty(X)$ the set of bounded functions from~$X$ to~$\C$, and define~$T:L^\infty(X) \to L^\infty(X)$ by
\begin{equation}
  T[f](x) = (f\circ a)(x) + \kappa(x) (f\circ b)(x).\label{eq:expr-T-kappa}
\end{equation}
Note that for~$\kappa=0$, the operator $T_0 : f\mapsto f\circ a$ has spectral radius~$1$, and in this case~$1$ is an eigenvalue. A corresponding eigenfunction is~$\1$, with eigenprojection given by~$f \mapsto f(x_0)\1$. Define
$$ \kappa_0 := \kappa(x_0). $$
An application of~\cite[th.~VIII.2.6]{Kato} (see also Theorem~1.6 of~\cite{Kloeckner2019}) shows that if~$T_0$ is compact, if~$1$ is an isolated simple eigenvalue of~$T_0$, and if~$\|\kappa\|_\infty$ is small enough in terms of~$a$, then the spectral radius of~$T$ is asymptotically~$1 + \kappa_0 + O(\|\kappa\|_\infty^2)$. In order for this estimate to be useful, it is crucial that~$\|\kappa\|_\infty^2 = o(\kappa_0)$. The setting in which we are interested here is one where such a bound is not satisfied because~$\kappa$ does not decay uniformly in~$X$.

We will answer this question, in the special case~$\kappa\geq 0$ and under the specific conditions stated below, by constructing an approximate eigenfunction and taking into account the interaction of~$a$ and~$b$ on~$X$. For~$k_1, k_2, \dotsc \in\N_{\geq 0}$ and~$x\in X$, we will use the shorthand notation~$a^{k_1} b^{k_2} \dotsc x$ for~$(a^{k_1}\circ b^{k_2} \circ \dotsc)(x)$.

Let~$(\alpha_k^+), (\alpha^-_k), (\beta_\ell), (\delta_\ell)$ (with indices~$k, \ell\in\N_{\geq 0}$) be sequences of non-negative real numbers. Assume that~$\gamma>0$, $\beta_0\geq 1$, and
\begin{align}
  c_1 := {}& \sum_{k\geq 0}\alpha_k^+ < +\infty \qquad \sum_{\ell \geq 1} \delta_\ell \beta_1 \dotsb \beta_{\ell-1} < +\infty, \label{eq:num-1} \\
  \eta := {}& \gamma+\sum_{k\geq 0} \alpha^-_k + \sum_{\ell \geq 2} \beta_1 \dotsb \beta_{\ell-1} < +\infty, \label{eq:num-2}
\end{align}
We make the following hypotheses.
\begin{align}
  \kappa(b^\ell x) {}& \leq \beta_\ell, {}& (\ell \geq 0) \label{eq:hyp-b} \\
  0 < \kappa_0 - \alpha_k^- \leq \kappa(a^k x) {}& \leq \kappa_0 + \alpha^+_k, {}& (k\geq 0), \label{eq:hyp-a} \\
  \kappa(a^kb a x) {}& \leq \kappa_0 + \gamma \alpha^+_k, {}& (k \geq 1). \label{eq:hyp-aba}
\end{align}
Finally, let~$g : X \to \R_+^*$ be such that
\begin{equation}
  \sup_{x\in X}\Big(g(x) + \frac{1}{g(ax)}\Big) < \infty, \qquad  \sup_{x\in X} \frac{g(x)}{g(b^\ell x)} + \sup_{\substack{x, y\in X}} \frac{g(x)}{g(b^\ell a y)} \leq \delta_\ell. \label{eq:hyp-g}
\end{equation}
Let~$T_{[g]}$ act on functions on~$X$ by~$T_{[g]}[f] := g T[g^{-1}f]$ (this is well-defined by~\eqref{eq:hyp-g}).

\begin{theorem}\label{thm:main}
  Under the conditions~\eqref{eq:num-1}--\eqref{eq:hyp-g}, if~$\kappa_0$ and~$\eta$ are small enough in terms of~$c_1$, then the series
  \begin{equation}\label{eq:defF}
   F_x(z) = \sum_{r\geq 0} z^{r} T_{[g]}^r[\1](x) \qquad (x\in X), \qquad F_+(z) = \sum_{r\geq 0} z^{r} \|T_{[g]}^r[\1]\|_\infty 
   \end{equation}
  have radius of convergence~$1-\kappa_0 + O_{c_1}(\eta\kappa_0 + \kappa_0^2)$, where the implied constant depends at most on~$c_1$. In particular,
  $$ \limsup_{r\to\infty} \| T_{[g]}^r\|_\infty^{1/r} = 1 + \kappa_0 + O_{c_1}(\eta\kappa_0+\kappa_0^2). $$
\end{theorem}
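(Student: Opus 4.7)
Since $\kappa \ge 0$ makes $T_{[g]}$ positivity-preserving, I would produce explicit pointwise upper and lower bounds $V_r^\pm(x)$ on $T_{[g]}^r[\1](x)$ and then read off the radius of convergence of $F_x(z)$ and $F_+(z)$ from $\sum_r V_r^\pm z^r$. Iterating~\eqref{eq:expr-T-kappa} yields the combinatorial expansion
\[
T_{[g]}^r[\1](x) \;=\; g(x) \sum_{w \in \{a,b\}^r} \frac{c_w(x)}{g(w \cdot x)}, \qquad c_w(x) := \prod_{i : w_i = b} \kappa(w_{i+1} \cdots w_r \cdot x),
\]
and each word $w$ decomposes uniquely as $a^{k_0} b^{\ell_1} a^{k_1} \cdots b^{\ell_j} a^{k_j}$, with $k_0, k_j \ge 0$ and $k_1, \ldots, k_{j-1}, \ell_1, \ldots, \ell_j \ge 1$.

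The first task is to bound each $\kappa$-factor of $c_w(x)$ by the appropriate hypothesis. Within the $m$-th $b$-run, the $s$-th $b$ counted from the right (for $1 \le s \le \ell_m - 1$) sees a suffix starting with $b^s$ and thus contributes $\beta_s$ by~\eqref{eq:hyp-b}; the last $b$ of the final run contributes $\kappa(a^{k_j} x) \in [\kappa_0 - \alpha^-_{k_j},\, \kappa_0 + \alpha^+_{k_j}]$ by~\eqref{eq:hyp-a}; the last $b$ of a non-final run (where $k_m \ge 1$) contributes $\kappa(a^{k_m} b^{\ell_{m+1}} \cdots x)$, bounded by $\kappa_0 + \gamma \alpha^+_{k_m}$ in the main case $\ell_{m+1} = 1$ via~\eqref{eq:hyp-aba}, and by $\kappa_0 + \alpha^+_{k_m}$ otherwise (with the surplus $b$'s in run $m+1$ producing extra smallness through $\beta_1 \cdots \beta_{\ell_{m+1}-1}$). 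The weight ratios $g(x)/g(w \cdot x)$ telescope into one factor $\delta_{\ell_m}$ per $b$-run by~\eqref{eq:hyp-g}.

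Summing the resulting product over the block parameters subject to $\sum k_i + \sum \ell_m = r$ decouples into one-dimensional geometric series whose sizes are controlled exactly by~\eqref{eq:num-1}--\eqref{eq:num-2}: the leading $\kappa_0$-contribution at each $b$-position aggregates binomially to $(1+\kappa_0)^r$, and each correction (an $\alpha^\pm$, a $\gamma\alpha^+$, a long $b$-run, or a $g$-weight) injects an extra factor of order $\eta$ accompanied by an extra $\kappa_0$, producing a uniform upper bound of the shape $(1 + \kappa_0 + O_{c_1}(\eta\kappa_0 + \kappa_0^2))^r$. The matching lower bound is obtained by retaining only words with widely-spaced isolated $b$'s and invoking the lower half of~\eqref{eq:hyp-a}. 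I expect the main obstacle to be confirming that the saving supplied by~\eqref{eq:hyp-aba} genuinely applies at all relevant $aba$-patterns in the expansion: without the factor $\gamma$ there, each non-final $b$ would inject an unchecked $\alpha^+_{k_m}$, inflating the error from $O(\eta\kappa_0)$ to $O(\kappa_0)$ and destroying the asymptotic. Tracking this saving while uniformly controlling all boundary cases (initial and terminal $a$-blocks, long $b$-runs, varying $g$-weights) is the most delicate part of the combinatorial bookkeeping.
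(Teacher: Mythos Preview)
Your proposal is essentially the paper's proof: the same combinatorial expansion of $T_{[g]}^r[\1](x)$ as a sum over words $w\in\{a,b\}^r$, the same block decomposition $a^{k_0}b^{\ell_1}a^{k_1}\cdots$, the same case split on the $\kappa$-factors using \eqref{eq:hyp-b}--\eqref{eq:hyp-aba} (including the crucial role of $\gamma$ at the $aba$-patterns that you correctly flag), and the same lower bound from words with isolated $b$'s via the lower half of~\eqref{eq:hyp-a}. Two minor divergences worth noting: the paper controls the ratio $g(x)/g(wx)$ by a \emph{single} application of~\eqref{eq:hyp-g} (giving $\delta_{k_1}$ when $k_0=0$, or a crude constant $c_2^2$ otherwise) rather than telescoping one $\delta_{\ell_m}$ per $b$-run as you describe; and the final summation is packaged through generating series $S_\sigma, S_\pm, S_*$ in the variable $\rho$ and a radius-of-convergence argument, rather than chasing a direct $(1+\kappa_0+O(\cdots))^r$ bound.
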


Translating Theorem~\ref{thm:main} in terms of an expansion of the leading eigenvalue of~$T$, instead of the spectral radius, would \textit{a priori} require additional hypotheses on~$a$ and~$b$, at the cost of restraining the applications. In the applications mentioned above, the objects of interest are, in fact, the iterates of some fixed function.

The method could in principle be extended to provide further lower order term, under a strengthening of the condition~\eqref{eq:hyp-a}, but this is not straightforward to carry out, especially compared with the methods of~\cite{Kato,Kloeckner2019}.

\section*{Acknowledgment}

The authors wish to thank L. Spiegelhofer for discussions on the topics of this work, and the anonymous referee for suggestions which helped improve the manuscript.

S. Bettin is member of the INdAM group GNAMPA and his work is partially supported by PRIN 2017 ``Geometric, algebraic and analytic methods in arithmetic'' and by INdAM. 

\section{Proof of Theorem~\ref{thm:main}}

The proof of Theorem~\ref{thm:main} is simply based on an explicit estimation of iterates of~$T_{[g]}$. In the proof, we denote~$c_2>0$ any number satisfying
$$ \beta_0 + \sum_{\ell\geq 1} \delta_\ell \beta_1 \dotsb \beta_{\ell-1} + \sup_{x\in X}\Big(g(x) + \frac{1}{g(a x)}\Big) \leq c_2. $$
The value of~$c_2$ will not affect the uniformity of the error term.

Given a word~$w = w_1 \dotsb w_n \in\{a,b\}^*$, of length~$\abs{w} = n$, and~$x\in X$, we interpret~$wx$ to mean~$w_1 \circ \dotsb \circ w_n (x)$. Let~$\eps$ denote the empty word. For all~$w\in\{a, b\}^*$ and~$x\in X$, we define~$u(w, x)$ recursively by
\begin{equation}
u(\eps, x) = 1, \qquad u(wa, x) = \frac{g(x)}{g(ax)} u(w, ax), \qquad u(wb, x) = \frac{g(x)}{g(bx)} \kappa(x) u(w, bx).\label{eq:def-u}
\end{equation}
It is easily seen, by induction, that
\begin{equation}
u(w, x) = \frac{g(x)}{g(wx)} \prod_{\substack{v\in\{a, b\}^* \\ w \in \{a, b\}^* b v}} \kappa(v x),\label{eq:prod-u}
\end{equation}
where the product is over all words~$v$ such that~$bv$ is a suffix of~$w$. For instance,
$$ u(aba^4b^2aba, x) = \frac{g(x)}{g(aba^4b^2abax)} \kappa(a^4b^2aba x) \kappa(baba x) \kappa(abax) \kappa(ax). $$
By iterating the relations~\eqref{eq:def-u}, we obtain that for all~$r\geq 0$,
\begin{equation}
  T_{[g]}^r[\1] (x) = \ssum{w\in\{a,b\}^r} u(w, x).\label{eq:series-Tr}
\end{equation}
There are as many~$\kappa$-factors in~$u(w, x)$ as occurences of~$b$ in~$w$. Since we expect~$\kappa$ to typically have small value, the main contribution to the sum~\eqref{eq:series-Tr} is expected to come from words containing few occurences of~$b$. For these terms, we expect the product~\eqref{eq:prod-u} to consist of words~$v$ starting with a long string of~$a$, and so with an associated~$\kappa$-value close to~$\kappa_0$. Similarly, under some regularity assumptions on~$g$ (which we eventually will not need), we expect $g(wx)\approx g(x_0)$ for such words.
If~$|w|_b$ denotes the number of occurrences of~$b$ in~$w$, then we are indeed led to expect~$T_{[g]}^r[\1](x) \approx \frac{g(x)}{g(x_0)}\sum_{w\in\{a, b\}^r} \kappa_0^{|w|_b} = \frac{g(x)}{g(x_0)}(1+\kappa_0)^r$.

We seek an upper-bound for~$u(w, x)$ valid for all words~$w$, and a lower-bound valid for specific words which are expected to yield the main contribution to the sum~\eqref{eq:series-Tr}. For~$\ell\geq 1$, write
$$ \sigma_\ell = \beta_1 \dotsb \beta_{\ell-1}, \qquad \delta_{k,\ell} = \begin{cases} c_2^{2} & (k>0), \\ \delta_\ell & (k=0), \end{cases}
 \qquad \gamma_{\ell} = \begin{cases} \gamma & (\ell=1), \\ 1 & (\ell>1). \end{cases}
 $$
with the convention~$\sigma_1=1$. To ease notations, we also denote
$$ \Pi(k_0, \dotsc, k_r) = \Big(\prod_{\substack{1\leq j \leq r \\ j\text{ odd}}} \sigma_{k_j} \Big) \Big(\prod_{\substack{1 \leq j \leq r-3 \\ j\text{ odd}}} (\kappa_0 + \gamma_{k_{j+2}} \alpha^+_{k_{j+1}}) \Big). $$
\begin{lemma}\label{lem:estim-u}
  For~$r\geq 2$, $k_0\in\N_{\geq 0}$,~$k_1, \dotsc, k_r\in\N_{>0}$ and~$x\in X$, we have
  \begin{align*}
    u(a^{k_0}b^{k_1} \dotsb a^{k_r}, x) \leq {}& \delta_{k_0, k_1} (\kappa_0 + \alpha_{k_r}^+) \Pi(k_0, \dotsc, k_r) {}& (r\text{ even}) \numberthis\label{eq:u-bound-upper-even}\\
    u(a^{k_0}b^{k_1} \dotsb b^{k_r}, x) \leq {}& \delta_{k_0, k_1} \beta_0 (\kappa_0 + \alpha_{k_{r-1}}^+) \Pi(k_0, \dotsc, k_r). {}& (r\text{ odd})\numberthis\label{eq:u-bound-upper-odd}\\
    \intertext{Moreover, for~$r\geq 0$, $k_0, k_1, \dotsc, k_r \geq 0$ and~$x\in X$, we have}
    u(a^{k_0} b a^{k_1} \dotsb b a^{k_r}, x) \geq {}& c_2^{-1} g(x) \prod_{1\leq j \leq r} (\kappa_0 - \alpha^-_{k_j}). \numberthis\label{eq:u-bound-lower}
  \end{align*}
\end{lemma}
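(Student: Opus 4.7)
My plan is to start from the product formula \eqref{eq:prod-u}, which expresses $u(w,x)$ as a ratio of $g$-values times a product of $\kappa$-values indexed by the occurrences of $b$ in $w$. Writing $w=a^{k_0}b^{k_1}a^{k_2}b^{k_3}\dotsb$, each letter $b$ produces a distinct suffix of $w$, and these suffixes can be grouped by the block $b^{k_j}$ (with $j$ odd) in which the $b$ sits. If the $b$ is the $i$-th letter from the left of this block, then the corresponding $v$ equals $b^{k_j-i}$ concatenated with the remaining tail of $w$, so $\kappa(vx)$ is evaluated at a point starting with $b^{k_j-i}$ when $i<k_j$ and at one starting with $a^{k_{j+1}}$ when $i=k_j$.

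For the upper bounds, I would split into these two situations. The $k_j-1$ inner positions $1\leq i<k_j$ give points of the form $b^{k_j-i}y$ with $y\in X$, to which \eqref{eq:hyp-b} applies directly and whose product across each block yields $\sigma_{k_j}=\beta_1\dotsb\beta_{k_j-1}$. For the last $b$ in block $j$, $vx=a^{k_{j+1}}(b^{k_{j+2}}\dotsb x)$, and the choice of hypothesis depends on what lies beyond: when $j\leq r-3$ the tail starts with $b^{k_{j+2}}a^{k_{j+3}}\dotsb$, and one applies \eqref{eq:hyp-aba} if $k_{j+2}=1$ (to gain the extra factor $\gamma$) and \eqref{eq:hyp-a} otherwise; this is exactly the factor $\kappa_0+\gamma_{k_{j+2}}\alpha_{k_{j+1}}^+$ appearing in $\Pi(k_0,\dotsc,k_r)$. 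The isolated boundary factors $(\kappa_0+\alpha_{k_r}^+)$ (even $r$) and $(\kappa_0+\alpha_{k_{r-1}}^+)$ (odd $r$) account for the situations where hyp-aba is unavailable because there is no suffix $ba$ beyond the $a^{k_{j+1}}$ block; in the odd case one also picks up a full block $b^{k_r}$, contributing $\prod_{i=1}^{k_r}\beta_{k_r-i}=\beta_0\sigma_{k_r}$, which explains the extra factor $\beta_0$.

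To control $g(x)/g(wx)$, I would split on whether $k_0=0$ or $k_0>0$. If $k_0=0$ then $wx=b^{k_1}a^{k_2}(\dotsb x)$ with $k_2\geq 1$, so the second half of \eqref{eq:hyp-g} yields $g(x)/g(wx)\leq\delta_{k_1}$. If $k_0>0$ then $wx$ begins with $a$, so $g(wx)\geq c_2^{-1}$ and $g(x)\leq c_2$, giving $g(x)/g(wx)\leq c_2^2$. These two cases are unified by the definition of $\delta_{k_0,k_1}$. For the lower bound, the word $w=a^{k_0}ba^{k_1}\dotsb ba^{k_r}$ has every suffix $bv$ satisfying $vx=a^{k_j}z$ with $z\in X$, so \eqref{eq:hyp-a} gives $\kappa(vx)\geq\kappa_0-\alpha_{k_j}^->0$; multiplied by $1/g(wx)\geq 1/c_2$ (from $g\leq c_2$) and $g(x)$, this produces the claimed inequality \eqref{eq:u-bound-lower}.

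The main obstacle is purely bookkeeping: matching the boundary behaviour at $j=r-1$ (even) or $j=r-2$ (odd) with the index truncation $j\leq r-3$ in $\Pi$, and being careful that hyp-aba only applies when both the single-$b$ and subsequent $a$-block are genuinely present. I expect no analytic difficulty beyond a clean case analysis.
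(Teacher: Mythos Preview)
Your proposal is correct and follows essentially the same route as the paper: expand $u(w,x)$ via the product formula~\eqref{eq:prod-u}, group the $\kappa$-factors by $b$-blocks, bound the inner letters by~\eqref{eq:hyp-b} (producing $\sigma_{k_j}$), the last letter of each block by~\eqref{eq:hyp-a} or~\eqref{eq:hyp-aba} according to whether $k_{j+2}>1$ or $k_{j+2}=1$, and control $g(x)/g(wx)$ via~\eqref{eq:hyp-g} split on $k_0=0$ versus $k_0>0$. Your write-up is in fact more explicit than the paper's, which only treats the even case in detail and declares the odd case and the lower bound ``similar''; your handling of the boundary block $b^{k_r}$ in the odd case (yielding $\beta_0\sigma_{k_r}$) and of the lower bound is exactly what is needed.
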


\begin{proof}
  Let us examine the case of positive, even~$r$. Then
  $$ u(a^{k_0}b^{k_1} \dotsb a^{k_r}, x) = \frac{g(x)}{g(a^{k_0} b^{k_1} \dotsb x)} \prod_{\substack{j=1 \\ \text{odd}}}^{r-1} \prod_{\ell=1}^{k_j}
  \kappa(b^{k_j-\ell} a^{k_{j+1}} \dotsb x). $$
  By~\eqref{eq:hyp-b}, we have~$\kappa(b^{k_j-\ell} a^{k_{j+1}} \dotsb x)\leq \beta_{k_j-\ell}$ if~$1\leq \ell \leq k_j-1$. If~$\ell = k_j$, then we may use~\eqref{eq:hyp-a}-\eqref{eq:hyp-aba} to obtain~$\kappa(a^{k_{j+1}} \dotsb x) \leq \kappa_0 + \gamma_{k_{j+2}} \alpha_{k_{j+3}}^+$ if~$j\leq r-3$, whereas if~$j = r-1$, then we use~\eqref{eq:hyp-a} to get~$\kappa(a^{k_r} x) \leq \kappa_0 + \alpha_{k_r}^+$. Finally, the hypotheses~\eqref{eq:hyp-g} yield~$\frac{g(x)}{g(a^{k_0} b^{k_1} \dotsb x)} \leq \delta_{k_0, k_1}$ in all cases. The proof for odd~$r$ and for the bound~\eqref{eq:u-bound-lower} is similar.
\end{proof}

We now sum this over~$r\geq 0$. Let
\begin{align*}
  S_\sigma(\rho) = {}& \sum_{\ell\geq 1} \rho^\ell\sigma_\ell, & S_+(\rho) = {}& \sum_{k\geq 1}\rho^k(\kappa_0 + \alpha_k^+), \\
  S_\delta(\rho) = {}& \sum_{\ell\geq 1} \rho^\ell \delta_\ell\sigma_\ell, & S_-(\rho) = {}& \sum_{k\geq 1}\rho^k(\kappa_0-\alpha_k^-), \\
  S_{*}(\rho) = {}& \sum_{k,\ell\geq 1} \rho^{k+\ell}\sigma_\ell(\kappa_0+\gamma_\ell\alpha_k^+).
\end{align*}
Define further
\begin{align*}
 V_r(x) &:= \ssum{k_0, \dotsc k_r \geq 0} \rho^{r + \sum_j k_j} u(a^{k_0}ba^{k_1} \dotsb b a^{k_r}, x),\\
V_r^+& := \ssum{k_0 \geq 0 \\ k_1, \dotsc, k_r \geq 1} \rho^{\sum_j k_j} \| u(a^{k_0}b^{k_1} \dotsb \ast^{k_r}, \cdot)\|_\infty, 
\end{align*}
where~$\ast = a$ or~$b$ according to whether~$r$ is even or odd. Note that, by~\eqref{eq:series-Tr} and positivity, for $F_x$ and $F_+$ as in~\eqref{eq:defF} we have
\begin{equation}\label{eq:main_ineq}
 \sum_{r\geq 0} V_r(x) \leq F_x(\rho) \leq F_+(\rho) \leq \sum_{r\geq 0} V_r^+. 
 \end{equation}

\begin{lemma}\label{lem:size-V}
  For~$0\leq\rho<1$, we have
  \begin{itemize}
    \item $V_0^+ \leq \frac{c_2^{2}}{1-\rho}$,
    \item $V_1^+ \leq \beta_0(S_\delta(\rho)+ c_2^2\frac{\rho}{1-\rho}S_\sigma(\rho))$,
    \item if~$r$ is even and~$r\geq 2$,
    $$ V_r^+ \leq ( S_\delta(\rho) + c_2^2\tfrac{\rho}{1-\rho}S_\sigma(\rho)) S_*(\rho)^{(r-2)/2} S_+(\rho), $$
    \item if~$r$ is odd and~$r\geq 3$,
    $$ V_r^+ \leq \beta_0( S_\delta(\rho) + c_2^2\tfrac{\rho}{1-\rho}S_\sigma(\rho)) S_\sigma(\rho) S_*(\rho)^{(r-3)/2} S_+(\rho), $$
    \item for all~$r\geq 0$,
    $$ V_r(x) \geq c_2^{-1} g(x) \tfrac1{1-\rho}(\rho S_-(\rho))^r. $$
  \end{itemize}
\end{lemma}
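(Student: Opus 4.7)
The plan is to substitute the upper bounds from Lemma~\ref{lem:estim-u} into the series defining $V_r^+$ and observe that the resulting multivariable sum decouples into a product of one- and two-variable sums, each of which matches one of the $S_\bullet(\rho)$. The lower bound on $V_r(x)$ is more direct, since~\eqref{eq:u-bound-lower} already presents the summand as a product over the indices $k_1, \ldots, k_r$.

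First I will handle $r = 0$ and $r = 1$ by direct computation, since Lemma~\ref{lem:estim-u} only applies for $r \geq 2$. For $V_0^+$, the definition gives $u(a^{k_0}, x) = g(x)/g(a^{k_0}x)$ (with $u(\varepsilon, x) = 1$), which is bounded by $c_2^2$ via the inequality $\sup_x(g(x) + 1/g(ax)) \leq c_2$ from~\eqref{eq:hyp-g}; summing the geometric series in $\rho^{k_0}$ yields the claim. For $V_1^+$, formula~\eqref{eq:prod-u} combined with~\eqref{eq:hyp-b} gives $u(a^{k_0}b^{k_1}, x) \leq \delta_{k_0, k_1}\beta_0\sigma_{k_1}$; I will split the $k_0$-sum into $k_0 = 0$ (where $\delta_{0, k_1} = \delta_{k_1}$, producing $\beta_0 S_\delta(\rho)$) and $k_0 \geq 1$ (where $\delta_{k_0, k_1} = c_2^2$, producing $c_2^2 \beta_0 \tfrac{\rho}{1-\rho}S_\sigma(\rho)$).

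For $r \geq 2$, I insert~\eqref{eq:u-bound-upper-even} or~\eqref{eq:u-bound-upper-odd} and expand $\Pi(k_0, \ldots, k_r)$. Each factor in the resulting bound depends on at most two consecutive indices: the leading block $\delta_{k_0, k_1}\sigma_{k_1}$ depends on $(k_0, k_1)$; the interior blocks $\sigma_{k_j}(\kappa_0 + \gamma_{k_j}\alpha^+_{k_{j-1}})$ for odd $j$ with $3 \leq j \leq r-1$ (even case) or $3 \leq j \leq r-2$ (odd case) each depend on $(k_{j-1}, k_j)$; and the trailing block depends on $k_r$ alone (even case) or on $(k_{r-1}, k_r)$ (odd case). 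Since these groups are disjoint the multiple sum factors: the leading block produces $S_\delta(\rho) + c_2^2\tfrac{\rho}{1-\rho}S_\sigma(\rho)$ exactly as for $V_1^+$; each interior pair produces $S_*(\rho)$ (identifying $k = k_{j-1}$, $\ell = k_j$ in the definition of $S_*$); and the trailing block produces $S_+(\rho)$ in the even case or $S_\sigma(\rho)S_+(\rho)$ in the odd case. Counting interior pairs yields $(r-2)/2$ or $(r-3)/2$ copies of $S_*(\rho)$ respectively, matching the claim.

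For the lower bound on $V_r(x)$, I apply~\eqref{eq:u-bound-lower} and pull out $c_2^{-1}g(x)\rho^r$; the remaining summand is a product of independent factors $(\kappa_0 - \alpha^-_{k_j})$ for $j = 1, \ldots, r$ together with a free sum over $k_0 \geq 0$ contributing $1/(1-\rho)$. Restricting each remaining sum to $k_j \geq 1$ gives $\sum_{k_j \geq 1}\rho^{k_j}(\kappa_0 - \alpha^-_{k_j}) = S_-(\rho)$, valid because $\kappa_0 - \alpha^-_k \geq 0$ by~\eqref{eq:hyp-a}; combining with the overall $\rho^r$ delivers $(\rho S_-(\rho))^r$. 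The main bookkeeping obstacle will be correctly identifying the boundary blocks in $\Pi(k_0, \ldots, k_r)$, which are treated differently from the interior ones and whose form depends on the parity of $r$; writing the bound out explicitly for small values (say $r = 2, 3, 4, 5$) should confirm the general pattern used above.
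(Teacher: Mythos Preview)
Your proposal is correct and follows essentially the same approach as the paper's proof: insert the bounds from Lemma~\ref{lem:estim-u} into the definition of~$V_r^+$ (respectively~$V_r(x)$), then observe that the resulting multivariable sum factors into the~$S_\bullet(\rho)$ blocks. The paper's write-up is terser (it only displays the even case~$r\geq 2$ and dismisses the rest as ``similar''), but the decomposition into the leading~$(k_0,k_1)$-block, the interior~$S_*$-pairs, and the trailing~$S_+$ (or~$S_+S_\sigma$) factor is exactly what you describe.
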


\begin{proof}
The first two inequalities follow easily as in the proof of Lemma~\ref{lem:estim-u}. Moreover, if $r\geq2$ is even, summing the estimate~\eqref{eq:u-bound-upper-even} we have
\begin{align*}
    V_r^+ &\leq \ssum{k_0 \geq 0 \\ k_1 \geq 1} \rho^{k_0+k_1} \delta_{k_0, k_1}\sigma_{k_1} \ssum{k_r \geq 1} \rho^{k_r} (\kappa_0 + \alpha_{k_r}^+)  \prod_{\substack{j=1 \\ j\text{ odd}}}^{r-3}\ssum{k_{j+1} \geq 1,\\ k_{j+2}\geq1}\rho^{k_{j+1}+k_{j+2}}\sigma_{k_{j+2}}(\kappa_0 + \gamma_{k_{j+2}} \alpha^+_{k_{j+1}})\\
   &= ( S_\delta(\rho) + c_2^2\tfrac{\rho}{1-\rho}S_\sigma(\rho))S_+(\rho) S_*(\rho)^{(r-2)/2}.
\end{align*}
The last two inequalities can be obtained in a similar way.
\end{proof}

We are ready to prove Theorem~\ref{thm:main}. On the one hand, we deduce that~$\sum_{r\geq 0} V_r^+$ converges if~$\rho<1$ and~$S_*(\rho) < 1$. 
But by~\eqref{eq:num-2} and the definition of $S_*$,
\begin{equation}
  S_*(\rho) \leq \frac{\kappa_0(1+\eta)}{1-\rho} + c_1\eta,\label{eq:majo-Sast}
\end{equation}
so that~$S_*(\rho)<1$ if~$\rho \leq 1 - \kappa_0 - c'\eta\kappa_0$, for some real number~$c'$ depending on~$c_1$. We conclude that the radius of convergence~$\rho_+$ of~$F_+(z)$ satisfies~$\rho_+ \geq 1 - \kappa_0 + O_{c_1}(\eta\kappa_0)$. 

On the other hand, we deduce that~$\sum_{r\geq 0} V_r(x)$ diverges if~$\rho S_-(\rho)>1$. Since
$$ S_-(\rho) \geq \frac{\kappa_0 \rho}{1-\rho} - \eta, $$
we deduce that~$\rho S_-(\rho) > 1$ if~$\rho \geq 1 - \kappa_0 + c'(\eta\kappa_0 + \kappa_0^2)$ if~$c'$ is taken large enough. We conclude that the radius of convergence~$\rho(x)$ of~$F_x(z)$ satisfies~$\rho(x) \leq 1-\kappa_0+O(\eta\kappa_0+\kappa_0^2)$. Theorem~\ref{thm:main} then follows by~\eqref{eq:main_ineq}.


\section{Proof of Theorem~\ref{th:MMR}}

For all~$x\in[0, 1]$, define
$$ a(x) = 1-\frac x2, \qquad b(x) = \frac x2, \qquad S(x) = \frac2{\sqrt{3}}\sin\Big(\frac {\pi x}2\Big). $$
Note that
\begin{equation}
  a^n(x) = \frac23 + \Big(\frac{-1}2\Big)^n\Big(x-\frac23\Big), \qquad b^n(x) = \frac1{2^n} x,\label{eq:cor1-iterates-ab}
\end{equation}
and~$S(2/3)=1$. Therefore, the product
\begin{equation}
  G(x) = \prod_{n\geq 0} S(a^n x)\label{eq:prod-g}
\end{equation}
converges absolutely for~$x\in(0, 1]$; note that, due to the~$n=0$ term, it vanishes at order~$1$ at~$x=0$. Finally, let~$\tau>0$, and~$g, \xi , \kappa :[0,1]\to[0,1]$ be given by
$$ \xi(x) := \frac{G(x/2)}{G(1-x/2)}, \qquad g(x) := G(x)^\tau, \qquad \kappa(x) := \xi(x)^\tau $$
The functions~$G$ and~$\xi$ are depicted in Figure~\ref{fig:plots_g_xi}.

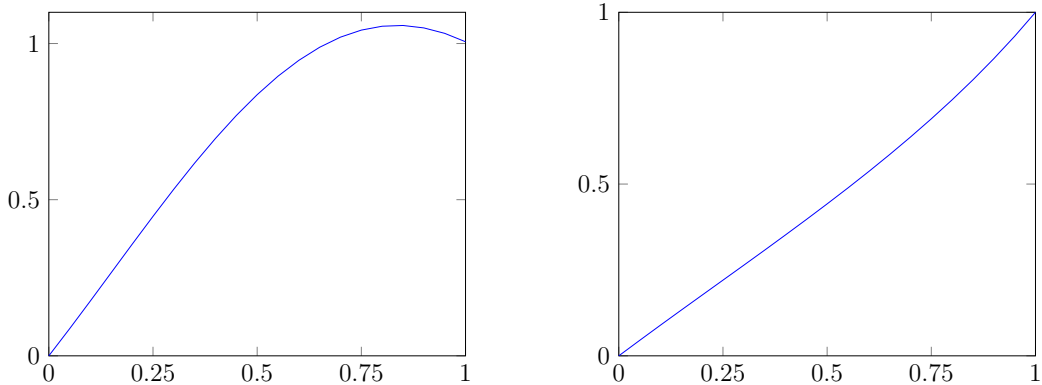
\begin{figure}[h!]
  \begin{subfigure}[t]{0.45\textwidth}
    \centering
    \begin{tikzpicture}[scale=0.8]
      \begin{axis}[
        xmin = 0, xmax = 1,
        ymin = 0, ymax = 1.1,
        xtick = {0, 0.25, 0.5, 0.75, 1},
        ytick = {0, 0.5, 1}]
        \addplot[color = blue]
        coordinates{(0.0, 0.0)(0.05, 0.087)(0.1, 0.1763)(0.15, 0.2668)(0.2, 0.3573)(0.25, 0.4466)(0.3, 0.5337)(0.35, 0.6173)(0.4, 0.6962)(0.45, 0.7696)(0.5, 0.8362)(0.55, 0.8954)(0.6, 0.9462)(0.65, 0.9881)(0.7, 1.0205)(0.75, 1.043)(0.8, 1.0554)(0.85, 1.0578)(0.9, 1.05)(0.95, 1.0326)(1.0, 1.0057)};
      \end{axis}
    \end{tikzpicture}
  \end{subfigure}
  ~
  \begin{subfigure}[t]{0.45\textwidth}
    \centering
    \begin{tikzpicture}[scale=0.8]
      \begin{axis}[
        xmin = 0, xmax = 1,
        ymin = 0, ymax = 1,
        xtick = {0, 0.25, 0.5, 0.75, 1},
        ytick = {0, 0.5, 1}]
        \addplot[color = blue]
        coordinates{(0.0, 0.0)(0.05, 0.0449)(0.1, 0.0893)(0.15, 0.1332)(0.2, 0.1768)(0.25, 0.2203)(0.3, 0.2638)(0.35, 0.3076)(0.4, 0.3519)(0.45, 0.3967)(0.5, 0.4423)(0.55, 0.4888)(0.6, 0.5366)(0.65, 0.5859)(0.7, 0.6369)(0.75, 0.6899)(0.8, 0.7453)(0.85, 0.8035)(0.9, 0.865)(0.95, 0.9302)(1.0, 1.0)};
      \end{axis}
    \end{tikzpicture}
  \end{subfigure}
  \caption{Approximate plots of~$G$ (left) and~$\xi$ (right)}  
  \label{fig:plots_g_xi}
\end{figure}

\begin{lemma}\label{lem:lambda}
  The function~$\xi:[0, 1]\to[0, 1]$ is of~$C^1$ class, increasing and bijective.
\end{lemma}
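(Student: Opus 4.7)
The plan is to first make the smoothness and vanishing behavior of $G$ manifest by an explicit factorization, from which the $C^1$-regularity and the boundary values of $\xi$ follow directly; strict monotonicity, hence bijectivity, is then obtained via a direct lower bound on $(\log\xi)'$.

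First I would write $G(x)=S(x)\,H(x)$ with $H(x):=\prod_{n\geq 1}S(a^n x)$. By~\eqref{eq:cor1-iterates-ab}, for every $x\in[0,1]$ one has $a^n x\in[1/2,1]$ and $|a^n x-2/3|\leq 2^{-n}$ for $n\geq 1$; since $S$ is $C^\infty$ and strictly positive in a neighborhood of $2/3$ with $S(2/3)=1$, both $\log H(x)=\sum_{n\geq 1}\log S(a^n x)$ and its formal termwise derivative (note $(a^n)'\equiv(-1/2)^n$) converge uniformly on $[0,1]$, so $H\in C^1([0,1])$ with $H>0$. Hence $G=SH\in C^1([0,1])$ vanishes to first order at $0$ and is positive on $(0,1]$. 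Since $G(1-x/2)>0$ on $[0,1]$, the quotient $\xi(x)=G(x/2)/G(1-x/2)$ is in $C^1([0,1])$, with $\xi(0)=0$, $\xi(1)=G(1/2)/G(1/2)=1$, and $\xi'(0)=G'(0)/(2G(1))=\pi H(0)/(2\sqrt{3}\,G(1))>0$.

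For strict monotonicity, set $\phi:=\log G$ on $(0,1]$ and $\psi(z):=\frac{\pi}{2}\cot(\frac{\pi z}{2})\geq 0$. Logarithmic differentiation and the chain rule give
\[
  \phi'(y)=\sum_{n\geq 0}(-1/2)^n\psi(a^n y), \qquad (\log\xi)'(x)=\tfrac{1}{2}\bigl(\phi'(x/2)+\phi'(1-x/2)\bigr).
\]
The $n=0$ contribution simplifies exactly using $\cot\theta+\tan\theta=2/\sin(2\theta)$ with $\theta=\pi x/4$, yielding
\[
  \psi(x/2)+\psi(1-x/2)=\frac{\pi}{\sin(\pi x/2)}\geq \pi.
\]
For $n\geq 1$ both $a^n(x/2)$ and $a^n(1-x/2)$ lie in $[1/2,1]$, where $0\leq\psi\leq\psi(1/2)=\pi/2$; hence the $n$-th summand lies in $[0,\pi]$, and retaining only the negative odd-index contributions bounds the tail from below by $-\pi\sum_{n\text{ odd}}2^{-n}=-\frac{2\pi}{3}$. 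Combining, $\phi'(x/2)+\phi'(1-x/2)\geq \pi-\frac{2\pi}{3}=\frac{\pi}{3}$, so $(\log\xi)'(x)\geq \pi/6>0$ on $(0,1]$. Together with $\xi'(0)>0$ this gives $\xi'>0$ on $[0,1]$; strict monotonicity, continuity, and $\xi(0)=0$, $\xi(1)=1$ then yield bijectivity of $\xi:[0,1]\to[0,1]$.

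The main obstacle is the monotonicity step: individual values of $\phi'$ can be of either sign (for instance $\phi'(1)<0$), so no termwise positivity is available. The argument hinges on the exact closed-form identity for the leading $n=0$ contribution, which must dominate the oscillating, geometrically decaying tail.
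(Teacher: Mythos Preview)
Your proof is correct. Both your argument and the paper's compute the logarithmic derivative of~$\xi$ as an alternating series of cotangent terms and show positivity, but the mechanism is different. The paper pairs up consecutive terms by introducing the functions
\[
  h_n(x) = \cot\Big(\tfrac\pi3 + \tfrac\pi2 \big(\tfrac{-1}2\big)^{n}\big(\tfrac x2 - \tfrac23\big)\Big) + \cot\Big(\tfrac\pi3 + \tfrac\pi2 \big(\tfrac{-1}2\big)^{n}\big(\tfrac 13 - \tfrac x2\big)\Big),
\]
proves each~$h_n$ is decreasing via~$\cot'=-1-\cot^2$, and verifies the endpoint inequality~$h_{2n}(1)>h_{2n+1}(0)$; this ensures that each pair~$\big(\tfrac{-1}{2}\big)^{2n}h_{2n}(x)+\big(\tfrac{-1}{2}\big)^{2n+1}h_{2n+1}(x)$ is positive. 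You instead isolate the~$n=0$ term exactly through the identity~$\cot\theta+\tan\theta=2/\sin(2\theta)$, giving~$\psi(x/2)+\psi(1-x/2)=\pi/\sin(\pi x/2)\geq\pi$, and control the whole tail by the crude uniform bound~$0\leq\psi\leq\pi/2$ on~$[1/2,1]$ together with the geometric decay~$\sum_{n\text{ odd}}2^{-n}=2/3$. Your route is more elementary (no monotonicity of~$h_n$ and no family of endpoint checks are needed) and even yields the quantitative bound~$(\log\xi)'\geq\pi/6$; the paper's pairing is sharper term-by-term and would adapt more readily if the ratio~$1/2$ in~$(-1/2)^n$ were closer to~$1$, where your margin~$\pi-2\pi/3$ would shrink to zero.
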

\begin{proof}
  The values~$\xi(0)=0$ and~$\xi(1)=1$ are simple to compute. The $C^1$ regularity of~$\xi$ follows by the uniform convergence of the product defining~$G$. To see that~$\xi'>0$, we define, for all~$x\in[0, 1]$ and~$n\geq 0$, with~$x\neq 0$ if~$n=0$,
  $$ h_n(x) = \cot\Big(\frac\pi3 + \frac\pi2 \Big(\frac{-1}2\Big)^{n}\Big(\frac x2 - \frac23\Big)\Big) + \cot\Big(\frac\pi3 + \frac\pi2 \Big(\frac{-1}2\Big)^{n}\Big(\frac 13 - \frac x2\Big)\Big). $$
  By the derivative~$\cot' = -1-\cot^2$ and since~$\cot\geq0$ on~$(0, \pi/2]$, we find~$h_n' \leq 0$. Moreover, we have
  $$ h_{2n}(1) - h_{2n+1}(0) = \cot\Big(\frac\pi3-\frac\pi{12}\frac1{4^n}\Big) - \cot\Big(\frac\pi3+\frac\pi{6}\frac1{4^n}\Big) > 0. $$
  We deduce that for all~$x, y \in (0, 1]$, we have~$h_{2n}(x) > h_{2n+1}(y)$, and so
  $$ \frac{\xi'}{\xi}(x) = \frac{\pi}4 \sum_{n\geq 0} \Big(\frac{-1}{2}\Big)^n h_n(x) > 0. $$
\end{proof}

We define the operator $T_\tau:L^\infty((0,1]) \to L^\infty((0,1])$ by
\begin{equation}
  T_\tau[f] := (f \circ a) +  \kappa \cdot (f\circ b). \label{eq:link-TU}
\end{equation}

\begin{lemma}\label{lem:U-T}
  For~$k\geq 1$, we have~$\varrho_k = \frac{3^k}2 \lim_{r\to+\infty} \|g T_{2k}^r[g^{-1}]\|_\infty^{1/r}$.
\end{lemma}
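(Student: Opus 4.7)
The plan is to establish the exact identity
\[ M_k(n) = \frac{3^{nk}}{2^n} \int_0^1 g(y)\, T_{2k}^n[g^{-1}](y)\,\dd y \]
and then to compare the $L^1$- and $L^\infty$-norms of $\Psi_n := g\, T_{2k}^n[g^{-1}] = T_{[g]}^n[\1]$.

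First I simplify the conjugated operator $T_{[g]}[f] := g\, T_{2k}[g^{-1} f]$. From the product formula~\eqref{eq:prod-g} one has $G(x) = S(x)\, G(a(x))$, so combined with the definition of $\kappa$ it follows that $g(x)/g(a(x)) = S(x)^{2k}$ and $\kappa(x)\, g(x)/g(b(x)) = S(x)^{2k}$. Thus $T_{[g]}[f] = S^{2k}(f\circ a + f\circ b)$, and by induction
\[ \Psi_n(y) = \ssum{w \in \{a,b\}^n} \prod_{i=1}^n S(w_{i+1}\cdots w_n\, y)^{2k}. \]

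Next I exploit the self-similarity of $h_n(x) := |T_n(x)|^{2k}$. Using $|1 - \e(y)|^2 = 4\sin^2(\pi y) = 3\, S(2y)^2$ one writes $h_n(x) = 3^{nk}\prod_{s=1}^n S(2^s x)^{2k}$, from which $h_n(a(x)) = h_n(b(x)) = 3^k S(x)^{2k} h_{n-1}(x)$ (the equality $h_n \circ a = h_n \circ b$ reflects the symmetry $|T_n(1-y)| = |T_n(y)|$). Iterating along any word $w \in \{a, b\}^n$ yields $h_n(wy) = 3^{nk}\prod_{i=1}^n S(w_{i+1}\cdots w_n\, y)^{2k}$, hence $\Psi_n(y) = 3^{-nk}\sum_w h_n(wy)$. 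The tent-map self-averaging identity $\int_0^1 f\,\dd y = \tfrac12 \int_0^1(f\circ a + f\circ b)\,\dd y$ iterated $n$ times produces $\int_0^1 f = 2^{-n}\int_0^1 \sum_{w\in\{a,b\}^n} f(wy)\,\dd y$; applied with $f = h_n$ this yields the claimed identity.

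From the identity and~\eqref{eq:asymp-mkn}, $\lim_n \bigl(\int_0^1 \Psi_n\bigr)^{1/n} = 2\varrho_k/3^k$. The limit $R := \lim_n \|\Psi_n\|_\infty^{1/n}$ exists by the submultiplicativity $\|\Psi_{n+m}\|_\infty \leq \|\Psi_n\|_\infty \|\Psi_m\|_\infty$ of positive iterates, and the trivial bound $\int\Psi_n \le \|\Psi_n\|_\infty$ immediately gives $\varrho_k \le (3^k/2) R$. The main obstacle is the reverse inequality, which amounts to ruling out that $\Psi_n$ develops narrow peaks inflating the sup-norm well beyond the integral. I would handle this via standard Ruelle--Perron--Frobenius theory: $T_{[g]} = S^{2k}(f\circ a + f\circ b)$ is the transfer operator of the piecewise linear expanding tent map with the smooth, strictly positive weight $S^{2k}$ on $(0, 1]$, so it has a simple leading eigenvalue $R$ with a positive continuous eigenfunction $\phi$ and a spectral gap on a suitable Banach space of regular functions (Lipschitz or bounded variation). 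This yields the uniform convergence $R^{-n}\Psi_n \to c\,\phi$ with $c > 0$, whence $\|\Psi_n\|_\infty \sim c\|\phi\|_\infty R^n$ and $\int_0^1 \Psi_n \sim c(\int\phi)\, R^n$, so the two root limits coincide. Combined with the identity above, this gives $\varrho_k = (3^k/2) R$, as claimed.
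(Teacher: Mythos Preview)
Your route differs from the paper's. The paper quotes Proposition~1 of~\cite{MauduitEtAl2018}, which already expresses $\varrho_k = \lim_r \|P_k^r[\1]\|_\infty^{1/r}$ for an explicit operator~$P_k$, and then performs two short algebraic reductions: a desymmetrisation showing $\|P_k^r[\1]\|_\infty \asymp (3^k/2)^r\|U_{2k}^r[\1]\|_\infty$ with $U_\tau[f] := S^\tau(f\circ a + f\circ b)$ (which is precisely your $T_{[g]}$), and the conjugation $U_{2k}[f] = g\,T_{2k}[g^{-1}f]$. No spectral theory beyond~\cite{MauduitEtAl2018} is needed. Your integral identity $M_k(n) = (3^k/2)^n\int_0^1 \Psi_n$ is correct and pleasant: it is self-contained and, combined with~\eqref{eq:asymp-mkn}, gives the $L^1$ growth rate of~$\Psi_n$ without appealing to~\cite{MauduitEtAl2018}.

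The genuine gap is the passage from the $L^1$ to the $L^\infty$ growth rate. The weight $S^{2k}$ vanishes at $x=0$, so the potential $2k\log S$ is unbounded below and the H\"older-potential version of RPF does not apply directly. On BV or Lipschitz, the relevant Hennion/Lasota--Yorke bound for the essential spectral radius of your operator is $\lim_n\bigl(\sup_y \prod_{j=1}^n S(T^j y)^{2k}\bigr)^{1/n} = 3^{-k}\bigl(\lim_n\|T_n\|_\infty^{1/n}\bigr)^{2k}$; since $|T_n(1/3)| = 3^{n/2}$, this bound is at least~$1$, while the spectral radius $2\varrho_k/3^k$ tends to~$1$ as $k\to\infty$. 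Thus the quasi-compactness machinery you invoke does not furnish a spectral gap uniformly in~$k$, and the convergence $R^{-n}\Psi_n \to c\phi$ is not established by the argument as written. The conclusion is true, but justifying it requires either the finite-dimensional recurrence underlying~\cite{MauduitEtAl2018} (which is what the paper ultimately relies on) or a bespoke spectral argument adapted to the degenerate weight.
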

\begin{remark}
  Note that the operator~$f \mapsto g T_{2k}[g^{-1} f]$ is well-defined also as an operator acting on~$C([0, 1])$, since~$g\circ a > 0$ on~$[0, 1]$ and by extending~$\frac{g}{g\circ b}$ continuously at~$0$.
\end{remark}

\begin{proof}
  By Proposition~1 of~\cite{MauduitEtAl2018}, we have
  $$ \varrho_k = \lim_{r\to+\infty} \| P_k^r[\1]\|_\infty^{1/r}, $$
  where~$P_k$ acts on continuous functions on~$[0,1]$ by
  $$ P_k[f](x) = \frac12\Big(2\sin\Big(\frac{\pi x}2\Big)\Big)^{2k} f\Big(\frac x2\Big) + \frac12\Big(2\cos\Big(\frac{\pi x}2\Big)\Big)^{2k} f\Big(\frac{x+1}2\Big). $$ 
  Note that~$P_\tau$ preserves the subspace of functions symmetric with respect to $\frac12$. We ``desymmetrize'' it by defining, for all~$\tau>0$, an operator~$U_\tau$ on~$C([0, 1])$ by
  $$ U_\tau[f](x) = S(x)^{\tau} \Big(f\Big(1-\frac x2\Big) + f\Big(\frac x2\Big)\Big). $$
Then, writing $f^*(t):=f(1-t)$, we have
  \begin{align*}
    P_k[f+f^\ast](x) = {}& \frac{3^k}{2}\big(U_{2k}[f](x) + U_{2k}[f](1-x)\big) \\
    = {}& \frac{3^k}2\big(U_{2k}[f](x) + U_{2k}[f]^\ast(x)\big),
  \end{align*}
   and so, by induction, we have for all~$r\in\N$
   \begin{equation} \label{relpu}
     P_k^r[f+f^*](x) = \Big(\frac{3^k}2\Big)^r(U_{2k}^r[f](x) + U_{2k}^r[f]^\ast(x)). 
   \end{equation}
   We take $f=\1$, and deduce by positivity that $\frac12\|U_{2k}^r[\1]\|_\infty\leq\|P_k^r[\1]\|_\infty\leq \|U_{2k}^r[\1]\|_\infty$. In particular,
  \begin{equation}
    \varrho_k = \frac{3^k}2 \lim_{r\to+\infty} \| U_{2k}^r[\1]\|_\infty^{1/r}.\label{eq:link-rho-Ur}
  \end{equation}
  By construction, we have~$T_\tau[f] = g^{-1} U_\tau[gf]$ for all~$f\in C((0, 1])$, in other words,~$gT_\tau[g^{-1}f] = U_\tau[f]$. This yields the claimed formula.
\end{proof}

We can now finish the proof of Theorem~\ref{th:MMR}.
Since~$G(x)$ vanishes at order~$1$ at~$x=0$, we may find~$c>0$ so that~$(cx)^\tau \leq g(x) \leq (x/c)^\tau$. Also, note that for~$0\leq x \leq y \leq 1$,
\begin{equation}\label{eq:lips}
 \xi(y)^\tau - \xi(x)^\tau \leq (y-x) \|\xi'\|_\infty \tau \xi(y)^{\tau-1}.
 \end{equation}
Define~$\kappa_0 := \xi(2/3)^\tau$, and
\begin{align*}
  \beta_\ell = {}& \xi(2^{-\ell})^\tau, \\
  \alpha_k^- = {}& \tfrac23 2^{-k} \|\xi'\|_\infty \tau \xi(\tfrac56)^{\tau-1}, \\
  \alpha_k^+ = {}& \begin{cases} 1 & (k\in \{0, 1\}), \\ 2^{-k}  \max(1, \tfrac23 \|\xi'\|_\infty \tau \xi(\tfrac34)^{\tau-1}) & (k\geq 2), \end{cases} \\
  \gamma ={}&  \tfrac23 \|\xi'\|_\infty \tau \xi(\tfrac78)^{\tau-1}, \\
  \delta_\ell = {}& c^{-2\tau}2^{1+(\ell+1) \tau}.
\end{align*}
We apply Theorem~\ref{thm:main} with~$\kappa = \xi^\tau$.
The condition~\eqref{eq:hyp-b} follows from the fact that~$\xi$ is increasing, and~$b^\ell([0, 1]) = [0, 2^{-\ell}]$. The condition~\eqref{eq:hyp-a} follows from~\eqref{eq:lips} and the inclusion~$a^k[0, 1] \subset [\frac23(1-2^{-k}), \frac23(1 + 2^{-k})]$. The condition~\eqref{eq:hyp-aba} follows from the inclusion~$a^kba[0, 1] \subset [0, \frac78]$, and the condition~\eqref{eq:hyp-g} follows from~$a [0, 1] \subset[\tfrac12, 1]$. The convergence of the series~\eqref{eq:num-1} is ensured by the fact that~$\xi(2^{-\ell}) \to 0$ as~$\ell\to\infty$. With~$\eta = O(\tau \xi(\frac78)^\tau)$, the above yields
$$ \limsup_{r\to+\infty} \|g T_\tau^r[g^{-1}]\|_\infty^{1/r} = 1 + \kappa_0 + O(\eta \kappa_0). $$
Lemma~\ref{lem:U-T} finishes the proof of Theorem~\ref{th:MMR}. From Lemma~\ref{lem:lambda}, we have~$\xi(\frac78) < 1$; the more precise bound~$\xi(\frac78) \in[0.833, 0.835]$ is checked numerically by truncating the product~\eqref{eq:prod-g} at~$n=11$ and estimating the remainder.

\section{Proof of Theorem~\ref{th:stern}}\label{sec:proof-theor-stern}

For~$x\in[0, 1]$, let
$$ a(x) = \frac1{1+x}, \qquad b(x) = \frac{x}{1+x}, $$
and for all~$\tau\geq 0$, define
$$ g(x) = (\phi + x)^\tau, \qquad \xi(x) = \frac{1 + \phi x}{\phi + x}, \qquad \kappa(x) = \xi(x)^\tau. $$
Note that $\xi$ is an increasing function with $\xi(0)=\phi^{-1}$, $\xi(1)=1$.
It is easy to see that if~$(F_n)_{n\geq 0} = (0, 1, 1, \dotsc)$ denotes the Fibonacci sequence, then for all~$n\in\N_{\geq 1}$,
$$ a^{ n}(x) = \frac{F_{n-1} x + F_{n}}{F_{n} x + F_{n+1}}, \qquad b^n(x) =\frac{x}{1+nx}. $$
Note also that the map~$\kappa:[0, 1]\to [0, 1]$ is increasing, with~$\kappa(1) = 1$.

For notation convenience, the variable~$k$ in the statement of Theorem~\ref{th:stern} will be renamed~$\tau$. In this section,~$\tau$ is a positive integer.

We define an operator~$T_\tau$ on~$C([0, 1])$ by
$$ T_\tau[f] = (f \circ a) + \kappa \cdot (f \circ b). $$
\begin{lemma}\label{lem:stern-op}
  For all~$\tau\in\N_{>0}$, there exist constants~$\sigma_\tau, D_\tau>0$ such that the asymptotic formula~\eqref{eq:asympt-MkN} holds. Moreover, we have
  $$ \sigma_\tau = \phi^{\tau} \limsup_{r\to \infty} \| g T_\tau^r[g^{-1}]\|_\infty^{1/r}, $$
\end{lemma}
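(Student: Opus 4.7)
The plan is to express $M_\tau(N)$ as a specialization of a polynomial generating function whose iteration is governed exactly by the conjugated operator $\phi^\tau g T_\tau g^{-1}$, after which Perron--Frobenius applied to a natural finite-dimensional invariant subspace will deliver both assertions.

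Concretely, I would set
$$ F(N; x) := \sum_{n=2^N}^{2^{N+1}-1} (s(n) + x\, s(n+1))^\tau, $$
a polynomial in $x$ of degree $\leq \tau$ with non-negative coefficients whose value at $x=0$ equals $\sum_{n=2^N}^{2^{N+1}-1} s(n)^\tau$. The palindromic identity $s(2^N + k) = s(2^{N+1}-k)$ for $0 \leq k \leq 2^N$ (a direct induction from the Stern recursion), together with $s(2^N) = s(2^{N+1}) = 1$, yields $M_\tau(N) = F(N; 0)$. Splitting the sum defining $F(N+1;x)$ by the parity of the index $m$ and applying the Stern recursion, the even terms $m=2n$ contribute $(1+x)^\tau F(N; b(x))$ directly, while the odd terms $m=2n+1$ contribute $(1+x)^\tau \sum_n (s(n) a(x) + s(n+1))^\tau$; the same palindrome, via the involution $n \mapsto 2^{N+1}+2^N-1-n$, swaps $(s(n), s(n+1))$ and rewrites this odd piece as $(1+x)^\tau F(N; a(x))$. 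A short computation using $\phi^2 = \phi + 1$ verifies that $\phi^\tau g T_\tau g^{-1}[f](x) = (1+x)^\tau [f(a(x)) + f(b(x))]$; call this operator $L_\tau$. Consequently $F(N+1;\cdot) = L_\tau[F(N;\cdot)]$ and, starting from $F(0;x) = (1+x)^\tau$,
$$ F(N;\cdot) = L_\tau^N[(1+\cdot)^\tau] = \phi^{\tau N} g\, T_\tau^N\bigl[g^{-1}(1+\cdot)^\tau\bigr]. $$

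Now $L_\tau$ preserves the $(\tau+1)$-dimensional space $P_\tau$ of polynomials of degree $\leq \tau$, since $L_\tau[x^k] = (1+x)^{\tau-k}(1+x^k)$. Its matrix in the monomial basis is entrywise non-negative and primitive (the image of $1$ is $2(1+x)^\tau$, a strictly positive column, and every $L_\tau[x^k]$ has positive constant term, so the square of the matrix is entrywise positive). Perron--Frobenius then gives a simple leading eigenvalue $\sigma_\tau > 0$ with a strictly positive eigenpolynomial $v$, while $(1+x)^\tau$, having positive coefficients, has strictly positive projection $c_0$ onto this eigenline. Thus $F(N;x) \sim c_0 \sigma_\tau^N v(x)$ uniformly on $[0,1]$, whence $M_\tau(N) = F(N;0) \sim c_0 v(0)\, \sigma_\tau^N$ with $D_\tau := c_0 v(0) > 0$, establishing~\eqref{eq:asympt-MkN}. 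The second assertion follows from $L_\tau^r = \phi^{\tau r} g T_\tau^r g^{-1}$ together with $L_\tau[\1] = 2(1+x)^\tau$, which give $\|L_\tau^r[\1]\|_\infty^{1/r} \to \sigma_\tau$. The key technical input is the palindromic identity: it is exactly what makes the odd and even parity contributions reassemble into the symmetric form $f(a(x)) + f(b(x))$ appearing in $T_\tau$, so that $T_\tau$ itself (rather than a larger $2(\tau+1)$-dimensional companion system mixing $F$ with its reciprocal) already encodes the moment dynamics.
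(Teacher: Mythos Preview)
Your argument is correct and reaches the same operator $L_\tau = P_\tau$ that the paper uses, but the bridge between $M_\tau(N)$ and iterates of that operator is genuinely different. The paper evaluates $P_\tau^N[\1]$ at $x=1$: expanding the iterate as a sum over products $B_{\eps_0}\cdots B_{\eps_{N-1}}$ of the two inverse-branch matrices, it sets up a bijection with products $A_1 A_{\eps'_1}\cdots A_{\eps'_{N-1}}$ of the Stern matrices (by commuting a swap matrix $T$ through), and recognises each summand as $s(n')^\tau$ for an odd $n'$ in $[2^N,2^{N+1})$; this yields $M_\tau(N)-M_\tau(N-1)=\tfrac12 P_\tau^N[\1](1)$. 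You instead introduce the two-variable generating polynomial $F(N;x)=\sum (s(n)+x\,s(n+1))^\tau$, derive the recursion $F(N+1;\cdot)=L_\tau[F(N;\cdot)]$ by a parity split together with the palindrome $s(2^N+k)=s(2^{N+1}-k)$, and read off $M_\tau(N)=F(N;0)$ directly. Your route is more elementary in that it avoids the matrix-word bijection and gives $M_\tau(N)$ itself (not a first difference) as a specialisation; the paper's route has the advantage of making the connection with the cocycle $j_M(x)$ and the thermodynamic-formalism viewpoint more transparent. Both then finish identically via Perron--Frobenius on $\R_\tau[x]$. One cosmetic remark: the equality $M_\tau(N)=F(N;0)$ only needs $s(2^N)=s(2^{N+1})=1$, not the full palindrome; the palindrome enters only in the recursion step.
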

\begin{proof}
  We claim that for all~$N\geq 1$,
  \begin{equation}
    M_\tau(N) - M_\tau(N-1) = \tfrac12 P_\tau^{N}[\1](1),\label{eq:rel-Mk-Pk}
  \end{equation}
  where~$P_\tau$ acts on degree~$\tau$ polynomials by
  $$ P_\tau[f](x) = (1+x)^\tau\Big(f\Big(\frac1{x+1}\Big) + f\Big(\frac{x}{x+1}\Big)\Big). $$
  To prove this, we let $B_0 =  \smatrix{0 & 1 \\ 1 & 1}$ and $B_1 =  \smatrix{ 1 & 0 \\ 1 & 1}$. Then, by the chain rule~\cite[eq.~(2.3)]{Iwaniec1997}, it follows that
  \begin{equation}
    P_\tau^N[f](x) = \ssum{\eps_0, \dotsc, \eps_{N-1} \in \{0, 1\} \\ M = B_{\eps_0} \dotsb B_{\eps_{N-1}}} j_M(x)^\tau f(M\cdot x)\label{eq:sum-PkN-Bj}
  \end{equation}
  where~$j_M(x) = cx + d$ if~$M = \smatrix{a & b \\ c & d}$. We now recall that if~$2^N\leq n < 2^{N+1}$ is written~$n = 2^N + \sum_{0\leq j < N} \eps_j 2^j$ in base~$2$, then the formula~\cite[eq.~(2.1)]{BDS}
  \begin{equation}
    \begin{pmatrix} s(n+1) \\ s(n) \end{pmatrix} = A_{\eps_0} \dotsb A_{\eps_{N-1}} \begin{pmatrix} 1 \\ 1 \end{pmatrix},
  \end{equation}
  holds, where~$A_0 = \smatrix{1 & 1 \\ 0 & 1}$ and $A_1 =  \smatrix{1 & 0 \\ 1 & 1}$. We wish to rewrite the sum~\eqref{eq:sum-PkN-Bj} in terms products of~$A_0$ and~$A_1$. Let~$T =  \smatrix{0 & 1 \\ 1 & 0}$, so that $TA_0 = A_1T = B_0$, and also~$TA_1=A_0T$. To each tuple~$(\eps_0, \dotsc, \eps_{N-1}) \in \{0, 1\}^N$, we associate a tuple~$(\eps'_0, \dotsc, \eps_N')\in\{0, 1\}^{N+1}$ such that
  $$ M := B_{\eps_0} \dotsb B_{\eps_{N-1}} = A_{\eps'_0} \dotsb A_{\eps'_{N-1}} T^{\eps'_N}, $$
  by writing~$B_1=A_1$, $B_0 = TA_0$, and then pushing all the occurences of~$T$ to the right, using~$T^2 = {\rm id}$ and~$TA_0 = A_1 T$. Then~$\eps'_N$ is given by the sign of~$\det(M)$. We also always have~$\eps'_0 = 1$. Finally, the map~$(\eps_0, \dotsc, \eps_{N-1}) \mapsto (\eps'_1, \dotsc, \eps'_N)$ is injective, since~$B_0$ and~$B_1$ are free over~$GL_2(\N)$ (their transposes map~$(\R_+^*)^2$ into~$\{(x, y)\in\R^2, x>y>0\}$ and~$\{(x, y)\in\R^2, y>x>0\}$ respectively), and thus also bijective.
  Using this bijection in~\eqref{eq:sum-PkN-Bj}, we deduce
  $$ P_\tau^N[\1](1) = \ssum{\eps'_1, \dotsc, \eps'_N \in \{0, 1\} \\ M = A_1 A_{\eps'_1} \dotsb A_{\eps'_{N-1}} T^{\eps'_N}} j_M(1)^\tau. $$
  Now we note that~$T\cdot 1 = 1$, so that for each tuple~$(\eps'_1, \dotsc, \eps'_N)$ in the sum,
  \begin{align*}
    j_M(1) = {}& \ppmatrix{0 & 1} A_1 A_{\eps'_1} \dotsb A_{\eps'_{N-1}} T^{\eps'_N} \ppmatrix{1 \\ 1} \\
    = {}& \ppmatrix{0 & 1} A_1 A_{\eps'_1} \dotsb A_{\eps'_{N-1}} \ppmatrix{1 \\ 1} \\
    = {}& s(n'),
  \end{align*}
  where~$s' = 2^N + \sum_{1\leq j < N} \eps'_j 2^j + 1$. Note that this is independent of~$\eps'_N$. As~$(\eps'_1, \dotsc, \eps'_{N-1})$ runs through~$\{0,1\}^{N-1}$,~$n'$ runs through the odd integers in~$[2^N, 2^{N+1})$. We deduce that
  $$ P_\tau^N[\1](1) = 2\ssum{2^N \leq n < 2^{N+1} \\ n\text{ odd}} s(n)^\tau, $$
  and finally~\eqref{eq:rel-Mk-Pk} follows since~$s(2n) = s(n)$.
  
  Since~$P_\tau$ acting on the set~$\R_\tau[x]$ of real polynomials of degree~$\leq \tau$, with its canonical basis, is positive, by the Perron-Frobenius theorem, it has a simple isolated dominant eigenvalue~$\sigma_\tau>0$, equal to its spectral radius, and actually~$\sigma_\tau> 1$ since~$P[\1] \geq 2\cdot \1$. We have in particular, by positivity,
  $$ \sigma_\tau = \limsup_{r\to\infty} \|P_\tau^r[\1]\|_\infty^{1/r}. $$
  By  spectral decomposition, we deduce the existence of a constant~$D'_\tau>0$ such that, as~$N\to \infty$,
  $$ M_\tau(N) - M_\tau(N-1) \sim D'_\tau \sigma_\tau^N, $$
  and therefore~$M_\tau(N) \sim D_\tau \sigma_\tau^N$ with~$D_\tau = D'_\tau \sigma_\tau/(\sigma_\tau-1)$. To conclude the proof, it suffices to remark that, by construction, $P_\tau[f] = \phi^\tau g T_\tau[g^{-1}f]$.
\end{proof}

Note that~$\|(a\circ a)'\|_\infty \leq 1/2$ and~$a(\frac1\phi) = \frac1\phi$, so that for~$k\in\N$,
$$ \| a^k - \tfrac1\phi\|_\infty \leq 2^{1-k/2}. $$
We let~$\kappa_0 := \kappa(\tfrac1\phi) = (\frac2{\sqrt{5}})^\tau$. Define
\begin{align*}
  \beta_\ell = {}& \xi(1/(\ell+1))^\tau \\
  \alpha^-_k = {}& 2^{1-k/2} \|\xi'\|_\infty \tau \xi(\tfrac1\phi)^{\tau-1}, \\
  \alpha_k^+ = {}& \begin{cases} 1 & (k\in \{0, 1\}), \\ 2^{1-k/2}  \max(1, \|\xi'\|_\infty \tau \xi(\tfrac23)^{\tau-1}) & (k\geq 2), \end{cases} \\
  \gamma ={}&  \|\xi'\|_\infty \tau \xi(\tfrac34)^{\tau-1}, \\
  \delta_\ell = {}& \phi^\tau
\end{align*}
We apply Theorem~\ref{thm:main}.
The hypothesis~\eqref{eq:hyp-b} is satisfied since~$b^\ell[0, 1] = [0, \tfrac1{\ell+1}]$.
The hypothesis~\eqref{eq:hyp-a} is satisfied by the inclusion~$a^k[0, 1] \subset [0, \tfrac23]$ if~$k\geq 2$.
The hypothesis~\eqref{eq:hyp-aba} follows by the inclusion~$a^kba[0, 1] = a^k[\tfrac13, \tfrac12] \subset [0, \tfrac34]$.
Finally the hypothesis~\eqref{eq:hyp-g} follows from~$\phi^\tau \leq g(x) \leq \phi^{2\tau}$.
We obtain~$\eta \ll \tau\xi(\tfrac34)^\tau$ and~$c_1$ bounded independently of~$\tau$, and deduce
$$ \limsup_{r\to \infty} \| g T_\tau^r[g^{-1}]\|_\infty^{1/r} = 1 + \kappa_0 + O(\eta\kappa_0). $$
Theorem~\ref{th:stern} then follows by Lemma~\ref{lem:stern-op}.

\bibliographystyle{plain}
\bibliography{op-approx.bib}

\end{document}